\definecolor{NoteColor}{rgb}{1,0,0}
 \newtheorem{thm}{Theorem}[section]
 \newtheorem{lem}[thm]{Lemma}
 \newtheorem{prop}[thm]{Proposition}
 \theoremstyle{definition}
 \newtheorem{defn}[thm]{Definition}
 \theoremstyle{remark}
 \newtheorem{rem}[thm]{Remark}
\newtheorem{notation}{Notation}
\newtheorem*{proofa}{Proof of Theorem 1.1}
 \newtheorem{ex}{Example}
 \numberwithin{equation}{section}
\DeclareMathOperator{\Dec}{Dec}
\DeclareMathOperator{\Mod}{Mod}
\DeclareMathOperator{\List}{List}
\newtheorem{alg}[thm]{Algorithm}
\begin{document}

%-------------------------------------------------------------------------
% editorial commands: to be inserted by the editorial office
%
%\firstpage{1} \volume{228} \Copyrightyear{2004} \DOI{003-0001}
%
%
%\seriesextra{Just an add-on}
%\seriesextraline{This is the Concrete Title of this Book\br H.E. R and S.T.C. W, Eds.}
%
% for journals:
%
%\firstpage{1}
%\issuenumber{1}
%\Volumeandyear{1 (2004)}
%\Copyrightyear{2004}
%\DOI{003-xxxx-y}
%\Signet
%\commby{inhouse}
%\submitted{March 14, 2003}
%\received{March 16, 2000}
%\revised{June 1, 2000}
%\accepted{July 22, 2000}
%
%
%
%---------------------------------------------------------------------------
%Insert here the title, affiliations and abstract:
%
%Insert here the title, affiliations and abstract:
%

\title[Free subgroups by Dehn twists]
 {Detecting Free Products in the Mapping Class Group of Punctured Disks via Dynnikov Coordinates}

%----------Author 1
\author[E. Medetoğulları]{Elif Medetoğulları}

\address{%
Department of Mathematics\\
Hitit University\\
19030 Corum, Turkey}

\email{elifdalyan@hitit.edu.tr}

%\thanks{This work was completed with the support of our
%\TeX-pert.}
%----------Author 2
\author[E. Dalyan]{Elif Dalyan}

\address{Department of Mathematics and Science Education\\
Mathematics Education\\
TED University\\ 
06420 Ankara, Turkey}
\email{elif.medetogullari@tedu.edu.tr}
%----------Author 3

%----------Author 4
\author[S.Ö. Yurttaş]{S. Öykü Yurttaş}
\address{Department of Mathematics\\
Dicle University\\ 
21280 Diyarbakır, Turkey}
\email{saadet.yurttas@dicle.edu.tr}
%----------classification, keywords, date
\subjclass{Primary 57M50; Secondary 57M60}

\keywords{Free group, mapping class group}

\date{January 22, 2025}
%----------additions
%\dedicatory{To my boss}
%%% ----------------------------------------------------------------------

\begin{abstract}

We prove that Dehn twists about opposite curves that define a complete partition on an $n$-punctured disk $D_n$ generate either a free group or a free product of abelian groups. Additionally, we introduce an algorithm based on Dynnikov coordinates to determine whether a given collection of opposite curves forms a complete partition. This algorithm not only verifies completeness but also reveals the exact structure of the free products generated by these Dehn twists, relying solely on the Dynnikov coordinates of the curves as input.

\end{abstract}

%%% ----------------------------------------------------------------------
\maketitle

\section{Introduction}

%\textcolor{red}{hi}COMPLETE FAMILIY OF OPPOSITE CURVES DEMEMIZIN FARKINI BELIRT

% Using update rules, we give an alternative proof for a well-known result saying Dehn twists about two curves with intersection number greater than equal to two is a free group of rank two,  on twice punctured disk.

Let $\Sigma$ be a compact, connected, orientable surface of genus $g$, possibly with finitely many punctures or boundary components. The mapping class group $\Mod(\Sigma)$ of $\Sigma$ is the group of isotopy classes of orientation-preserving homeomorphisms of $\Sigma$. It is well known that $\Mod(\Sigma)$ is generated by Dehn twists (see, for example, \cite{ Dehn, Farb, humphries2, lickorish2}). 

 A natural question regarding the algebraic structure of the mapping class group is to understand which subgroups of $\Mod(\Sigma)$ are generated by Dehn twists along curves based on their geometric intersection number  \cite{
Farb, hamidi, humphries, Ishida, ivanovm, kolay}. In particular, in \cite{hamidi, Ishida, ivanovm} it is shown that if the geometric intersection number of two curves is bigger than one  then  the group generated by the Dehn twists about these curves is isomorphic to a free group of rank two. Using Dynnikov coordinates  and the so-called update rules \cite{dynnikov,yurttas1,yurttas3}, a different proof of this result is given in \cite{preprint}. The question of which groups can be generated by more than two Dehn twists has been studied only under some particular conditions \cite{hamidi,humphries}. 
 
 In this paper, we focus on subgroups of $\Mod(D_{n})$ generated by Dehn twists about the so-called \emph{opposite  curves} on $D_n$, and  prove that Dehn twists about these curves generate a free group   or a free product of Abelian groups. Here and in what follows a curve $c$ in $D_n$ means the isotopy class of an essential simple closed curve in $D_n$  (i.e. $c$ does not bound  a puncture, or the boundary component). We denote by $t_c$  the positive (right-handed) Dehn twist about the curve $c$. The main result of this paper is as follows: 

\begin{thm}\label{thm:main}
 Let  $\mathcal{C}=\{c_1, c_2, \ldots c_k\}$ be a family of opposite curves on $D_n$. Suppose that $\mathcal{P}=\{P_1, P_2, \ldots , P_m \}$  is a complete partition of $\mathcal{C}$ where $|P_i|=n_i$ $(1\leq i\leq m)$. Then $ \langle t_{c_1}, \ldots, t_{c_k} \rangle $ is isomorphic to the free product $\mathbb {Z}^{n_1}*\mathbb {Z}^{n_2}*\cdots*\mathbb {Z}^{n_m}$.  Furthermore,  if $\mathcal{C}$ is maximal, then $ \langle t_{c_1}, \ldots, t_{c_k} \rangle $ is isomorphic to the free group of rank $k$.  
\end{thm}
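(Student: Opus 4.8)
The plan is to prove Theorem~\ref{thm:main} by isolating two essentially independent statements: (i) each block of the complete partition generates a free abelian group of the expected rank, and (ii) the block subgroups generate their free product inside $\Mod(D_n)$. Statement (ii) is where Dynnikov coordinates enter, through a ping-pong argument.

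For (i), write $H_i=\langle t_c : c\in P_i\rangle$. The definition of a block of a complete partition should force the curves in $P_i$ to be pairwise disjoint and pairwise non-isotopic, so the generators of $H_i$ commute and $H_i$ is a quotient of $\mathbb{Z}^{n_i}$. To exclude any further relation I would, for each $c\in P_i$, produce a curve $d_c$ with $i(c,d_c)>0$ but $i(c',d_c)=0$ for every other $c'\in P_i$ --- the change-of-coordinates principle applied to the multicurve $\bigcup_{c\in P_i}c$ supplies such a $d_c$. A word $\prod_{c\in P_i}t_c^{a_c}$ then acts on $d_c$ exactly as $t_c^{a_c}$, and $t_c^{a_c}(d_c)=d_c$ forces $a_c=0$ because $i(c,d_c)>0$; hence $H_i\cong\mathbb{Z}^{n_i}$.

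For (ii) I would run the ping-pong lemma for free products on the set $\mathcal{L}$ of integral laminations of $D_n$, which Dynnikov coordinates identify with a subset of $\mathbb{Z}^{2n-4}$ carrying a $\Mod(D_n)$-action given by the piecewise-linear update rules of \cite{dynnikov,yurttas1,yurttas3}. For each block, set $C_i=\bigcup_{c\in P_i}c$ and define a ``funnel'' $X_i\subset\mathcal{L}$ by linear inequalities on Dynnikov coordinates expressing that a lamination looks like a weighted sub-multicurve of $C_i$ --- roughly, that its weight in the complementary regions of $C_i$ is dominated by its intersection number with $C_i$. That $\mathcal{C}$ admits a \emph{complete} partition is precisely the combinatorial input making the $X_i$ pairwise disjoint and keeping every curve of $\mathcal{C}$ outside the funnels of the other blocks: the ``opposite'' position of curves in different blocks plays here the role that $i(a,b)\geq 2$ plays for a single pair. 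The dynamical core is the inclusion, for every nontrivial $g=\prod_{c\in P_i}t_c^{a_c}\in H_i$ and every $\lambda\notin X_i$,
\[
g(\lambda)\in X_i,
\]
which should follow because the curves of $P_i$ are disjoint, so the update rules show that twisting about each $c$ with $a_c\neq0$ drives the coordinates of $\lambda$ toward those of $C_i$ at a definite linear rate while the separate twists do not interfere. With this in hand the ping-pong lemma for free products applies directly --- the $H_i$ being infinite, the two-block case needs no special treatment --- and yields $\langle t_{c_1},\dots,t_{c_k}\rangle=H_1*\cdots*H_m\cong\mathbb{Z}^{n_1}*\cdots*\mathbb{Z}^{n_m}$, which is the first assertion; this follows the line used for a single pair of curves in \cite{preprint} (compare \cite{Ishida,hamidi,ivanovm}).

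For the last assertion, when $\mathcal{C}$ is maximal one checks from the definitions that no block of a complete partition can contain two curves, so $m=k$ and each $n_i=1$, and the free product $\mathbb{Z}*\cdots*\mathbb{Z}$ of $k$ copies of $\mathbb{Z}$ is the free group of rank $k$. I expect the genuine difficulty to be the uniform ping-pong inclusion displayed above --- extracting from the explicit update rules the quantitative fact that one application of \emph{any} nontrivial element of a block subgroup already carries the complement of its funnel into that funnel --- while the reduction to this fact and the abelian-block computation should be routine once the definitions of opposite curves and complete partitions are unwound.
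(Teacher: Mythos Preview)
Your overall architecture matches the paper's: split into (i) each block $H_i$ is free abelian of rank $n_i$, and (ii) the $H_i$ generate their free product via a ping-pong argument on $\mathcal{S}_n$ using Dynnikov coordinates, then read off the maximal case as $m=k$, $n_i=1$. Your treatment of (i) is in fact more careful than the paper's, which simply asserts $G_i\cong\mathbb{Z}^{n_i}$ from pairwise disjointness.

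The substantive divergence is in the ping-pong sets, and here your proposal has a real gap. You define $X_i$ quantitatively, as a ``funnel'' of laminations whose coordinates are dominated by their intersection with $C_i=\bigcup_{c\in P_i}c$. With that kind of neighbourhood-of-$C_i$ definition the standard outcome is only that \emph{sufficiently high} powers of the twists satisfy the inclusion; getting a \emph{single} twist $t_c^{\pm1}$ to carry the complement of $X_i$ into $X_i$ is exactly the obstruction that forces passage to powers in results such as \cite{hamidi,koberda}. You flag this as the expected difficulty, but there is no indication of how to overcome it, and with the funnel as described I do not believe it can be.

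The paper avoids this entirely by choosing $X_i$ \emph{qualitatively} rather than metrically. Its $X_i$ is the set of curves whose Dynnikov loop-list contains the ``decisive set'' $\Dec(c_r)$ of some $c_r\in P_i$; membership is a finite combinatorial condition on which large left/right loops occur in the regions $\Delta_{\ell,m}$. The engine is then Proposition~\ref{prop}: if $c_1$ has a right loop and $c_2$ a left loop in the same $\Delta_{\ell,m}$, then already $t_{c_1}^{\pm1}(c_2)$ has a right loop there. One twist flips the loop type, so one twist moves $X_s$ into $X_i$---no growth estimate is needed. This is the idea your plan is missing; once you replace your funnels by the loop-type sets $X_i$ of Definition~\ref{def:x_i} and invoke Proposition~\ref{prop}, the rest of your outline goes through as in the paper.
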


We note that similar results are given in (\cite{hamidi}, Theorem $7.2$), and (\cite{humphries}, Theorem $2.1$) using different approaches based on  the well-known Ping-Pong Lemma \cite{chris, Farb, hamidi, harpe, humphries, Ishida,  koberda, kolay, lyndon, OHGT, olin}. The main difference in our approach that we would like to emphasize is the usage of the Dynnikov coordinates  \cite{dynnikov, yurttas3}  in the construction of the sets that play an important role in the Ping-Pong Lemma. Furthermore, the conditions in Theorem \ref{thm:main} give a way to investigate many interesting examples in $D_n$ that can not be studied in \cite{hamidi, humphries}.

This paper is organized as follows. Section \ref{sec:dynn} gives preliminary definitions and notions related with Dynnikov coordinates. Section \ref{tools} gives  necessary tools and  a key result to prove our main theorem given in Section \ref{thmSection}  which also includes illustrative examples. Finally, Section \ref{sec:alg-state} presents  Algorithm \ref{alg:algorithm} which checks whether Dehn twists about given opposite curves generate a free group or a free product of Abelian groups.

\section{Preliminaries}\label{sec:dynn}

\subsection{The Dynnikov coordinate system} The Dynnikov coordinate
system \cite{dynnikov} gives, for each $n\ge 3$, a bijection \mbox{$\rho\colon
\mathcal{S}_n\to \mathbb{Z}^{2n-4}\backslash \{0\}$}, where $\mathcal{S}_n$ denotes the set of multicurves (collection of mutually disjoint essential simple closed curves up to isotopy) in $D_n$, which is defined as follows: 

Construct {\em Dynnikov arcs} $\alpha_i$ ($1\le i\le 2n-4$) and $\beta_i$
($1\le i\le n-1$) in $D_n$ as depicted in Figure~\ref{fig:dynn-arcs}. Every multicurve
$\mathcal{L}\in\mathcal{S}_n$, has a taut representative $L$ of $\mathcal{L}$ ( i.e. $L$ intersects each $\alpha_i$ and  $\beta_i$ minimally). We also write $\alpha_i$  and  $\beta_i$ for the number of
intersections of $L$ with the arc $\alpha_i$ and  $\beta_i$ respectively. 

\begin{figure}[htbp]

\labellist
\small\hair 2pt
  \pinlabel {\begin{turn}{-90}$\scriptstyle{\alpha_{1}}$\end{turn}} [ ] at  224 340

  \pinlabel {\begin{turn}{-90}$\scriptstyle{\alpha_{2}}$\end{turn}} [ ] at  224 170

\pinlabel {\begin{turn}{-90}$\scriptstyle{\alpha_{2i-2}}$\end{turn}} [ ] at 306 170

\pinlabel {\begin{turn}{-90}$\scriptstyle{\alpha_{2i-3}}$\end{turn}} [ ] at 306 340
 \pinlabel {\begin{turn}{-90}$\scriptstyle{\alpha_{2i-1}}$\end{turn}} [ ] at  413 340
 \pinlabel {\begin{turn}{-90}$\scriptstyle{\alpha_{2i}}$\end{turn}} [ ] at  413 170

 \pinlabel {\begin{turn}{-90}$\scriptstyle{\alpha_{2i+1}}$\end{turn}} [ ] at  510 340
 \pinlabel {\begin{turn}{-90}$\scriptstyle{\alpha_{2i+2}}$\end{turn}} [ ] at  510 170 

 \pinlabel {\begin{turn}{-90}$\scriptstyle{\alpha_{2n-5}}$\end{turn}} [ ] at  580 340

 \pinlabel {\begin{turn}{-90}$\scriptstyle{\alpha_{2n-4}}$\end{turn}} [ ] at 580 170

\pinlabel {$\scriptstyle{\beta_{n-1}}$} [ ] at 650 100
\pinlabel {$\scriptstyle{1}$} [ ] at 113 230
\pinlabel {$\scriptstyle{2}$} [ ] at 200 230
\pinlabel {$\scriptstyle{i}$} [ ] at 300 230
\pinlabel {$\scriptstyle{i+1}$} [ ] at 380 230
\pinlabel {$\scriptstyle{i+2}$} [ ] at 470 230
\pinlabel {$\scriptstyle{n-1}$} [ ] at 594 230
\pinlabel {$\scriptstyle{n}$} [ ] at 677 230

\pinlabel {$\scriptstyle{\beta_{1}}$} [ ] at 175 100
\pinlabel {$\scriptstyle{\beta_{i}}$} [ ] at 365 100
\pinlabel {$\scriptstyle{\beta_{i+1}}$} [ ] at 473 100

%\pinlabel {$\tiny{d_{2}}$} [ ] at 110 0
% \pinlabel {$\scriptstyle{1}$} [ ] at 25 48
 
   %\pinlabel {$\scriptstyle{2}$} [ ] at 71 45
%    \pinlabel {$\scriptstyle{2}$} [ ] at 105 45
 
 % \pinlabel {$\scriptstyle{4}$} [ ] at 135 45
 %  \pinlabel {$\scriptstyle{3}$} [ ] at 175 48

\endlabellist

  \includegraphics[width=0.6\textwidth]{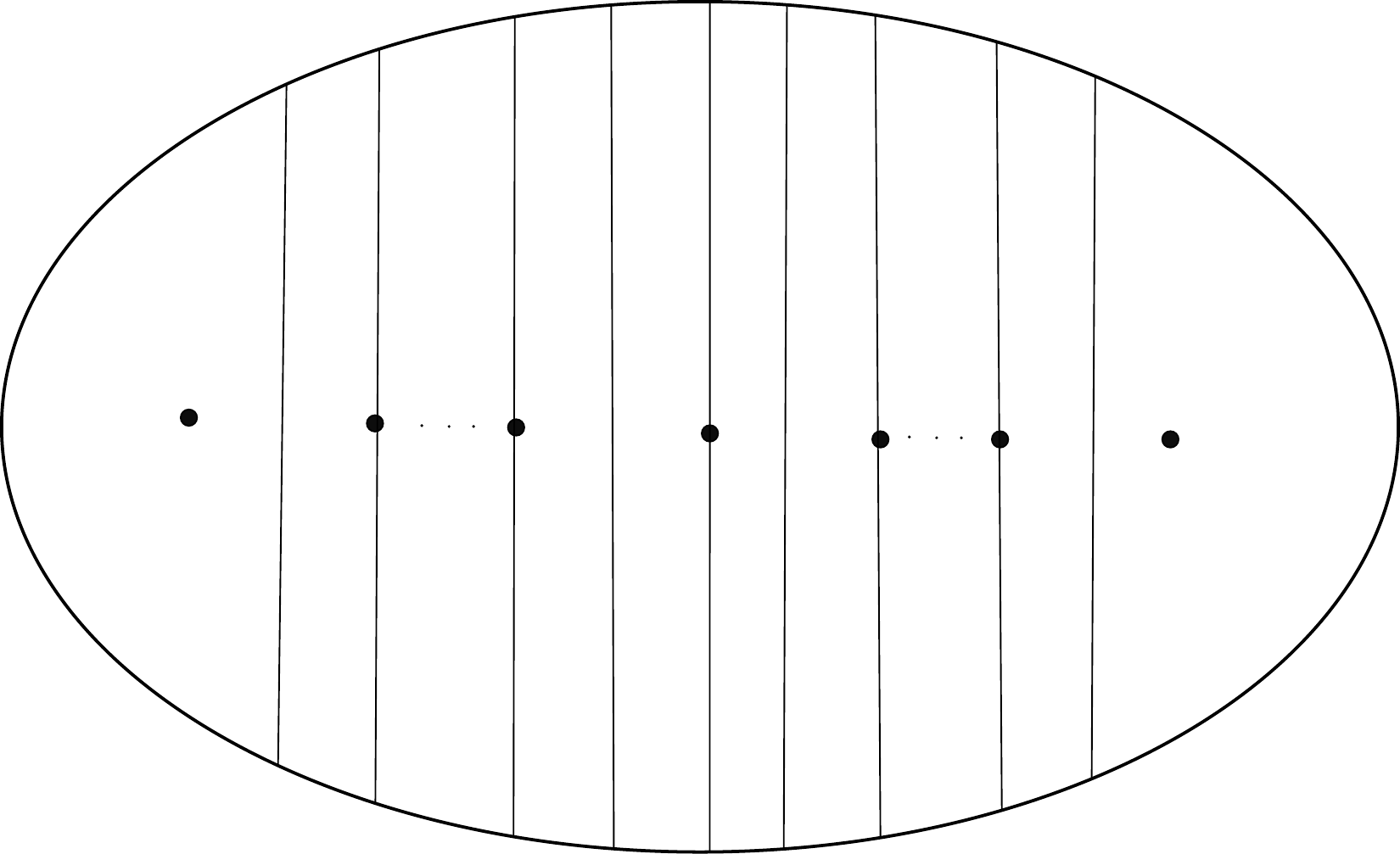}
  \caption{The arcs $\alpha_i$ and $\beta_i$}
  \label{fig:dynn-arcs}

%\begin{center}

%\end{center}
\end{figure}

 The {\em Dynnikov coordinate function} $\rho\colon\mathcal{S}_n\to\mathbb{Z}^{2n-4}\backslash \{0\}$
is defined by
\[
\rho(\mathcal{L}) = (a;\,b) = (a_1,\ldots,a_{n-2};\,b_1,\ldots,b_{n-2}),
\] where
\begin{equation}
\label{eq:dynn-coords} a_i = \frac{\alpha_{2i}-\alpha_{2i-1}}{2}
\qquad\text{and}\qquad b_i = \frac{\beta_i - \beta_{i+1}}{2} \qquad
\end{equation} for $1\le i\le n-2$.

%GEREK YOK RELAXED EGRIYE?

 % \begin{defn}\label{defrelaxed}

%A curve  $c_{i,j}\in  D_n (i<j) $ is relaxed if it is isotopic to a simple closed curve which bounds a disk containing the set of punctures $\{i,i+1,\dots,j\}$ intersecting the horizontal diameter of the disk exactly twice 
%Hence, we can write
%\[
%\rho(c_{i,j}) = (0, \ldots, 0, b_1, \ldots, b_{n-2})
%\]
%where \( b_{i-1} = -1 \) if \( i > 1 \), \( b_{j-1} = 1 \) if \( j < n \) and \( b_k = 0 \) for all other cases (see Figure \ref{fig:relaxed}). 

 %\end{defn} 

% BURAYA SECTIONLARDA NE YAPILDIGI ANLATILACAK (Introya ekledim.)
We  note that for a curve $c\in \mathcal{S}_n$ with $\rho(c) = (a;\,b) = (a_1,\ldots,a_{n-2};\,b_1,\ldots,b_{n-2})$ we write  $b_i(c)=b_i$. We write  $\Delta_i$ for the region in $D_n$ bounded by the arcs $\beta_{i}$ and $\beta_{i+1}$, and $\displaystyle \Delta_{i,j}= \bigcup_{k=i}^{j}\Delta_k$ (see Figure \ref{fig:lloops}).

 \begin{defn}(Loops) Let $c\in \mathcal{S}_n$.  A \emph{left loop} of $c\cap \Delta_i$ has both end points on $\beta_{i+1}$ and a \emph{right loop} of $c\cap \Delta_i$ has both end points on $\beta_{i}$ . A \emph{large left loop} of $c\cap \Delta_{i,j}$ ($j>i$)  has both end points on $\beta_{j+1}$, and intersects the horizontal diameter of $D_n$ only  between the punctures $i$ and $i+1$. Similarly, a \emph{large right loop} of $c\cap \Delta_{i,j}$ ($j>i$)  has both end points on $\beta_{i}$, and intersects the horizontal diameter of $D_n$ only  between the punctures $j+1$ and $j+2$. We write $L_{i,j}(c)$ and $R_{i,j}(c)$ to denote  the large left and large right loops of $c\cap \Delta_{i,j}$ respectively (see Figure \ref{fig:lloops}). We also use the same symbols to denote the number of large and large loops of $c\cap \Delta_{i,j}$ for convenience .

 \vspace{0.3 cm}
  \begin{figure}[htb!]
\labellist
\small\hair 2pt
\pinlabel {$\scriptstyle{i+1}$} [ ] at  58 210
\pinlabel {$\tiny{L_{i,j}(c)}$} [ ] at  70 120
\pinlabel {$\scriptstyle{j+1}$} [ ] at  540 210
\pinlabel {$\tiny{\Delta_{i,j}}$} [ ] at  100 10
\pinlabel {$\scriptstyle{i+1}$} [ ] at 422 210
\pinlabel {$\tiny{R_{i,j}(c)}$} [ ] at  540 120
\pinlabel {$\scriptstyle{j+1}$} [ ] at  173 210
\pinlabel {$\tiny{\Delta_{i,j}}$} [ ] at  500 10
 \pinlabel {$\tiny{\beta_{i}}$} [ ] at  5 487
 \pinlabel {$\tiny{\beta_{j+1}}$} [ ] at 208 487

\pinlabel {$\tiny{\beta_{j+1}}$} [ ] at  600 487
 \pinlabel {$\tiny{\beta_{i}}$} [ ] at 400 487
\endlabellist  
 \includegraphics[scale=0.25]{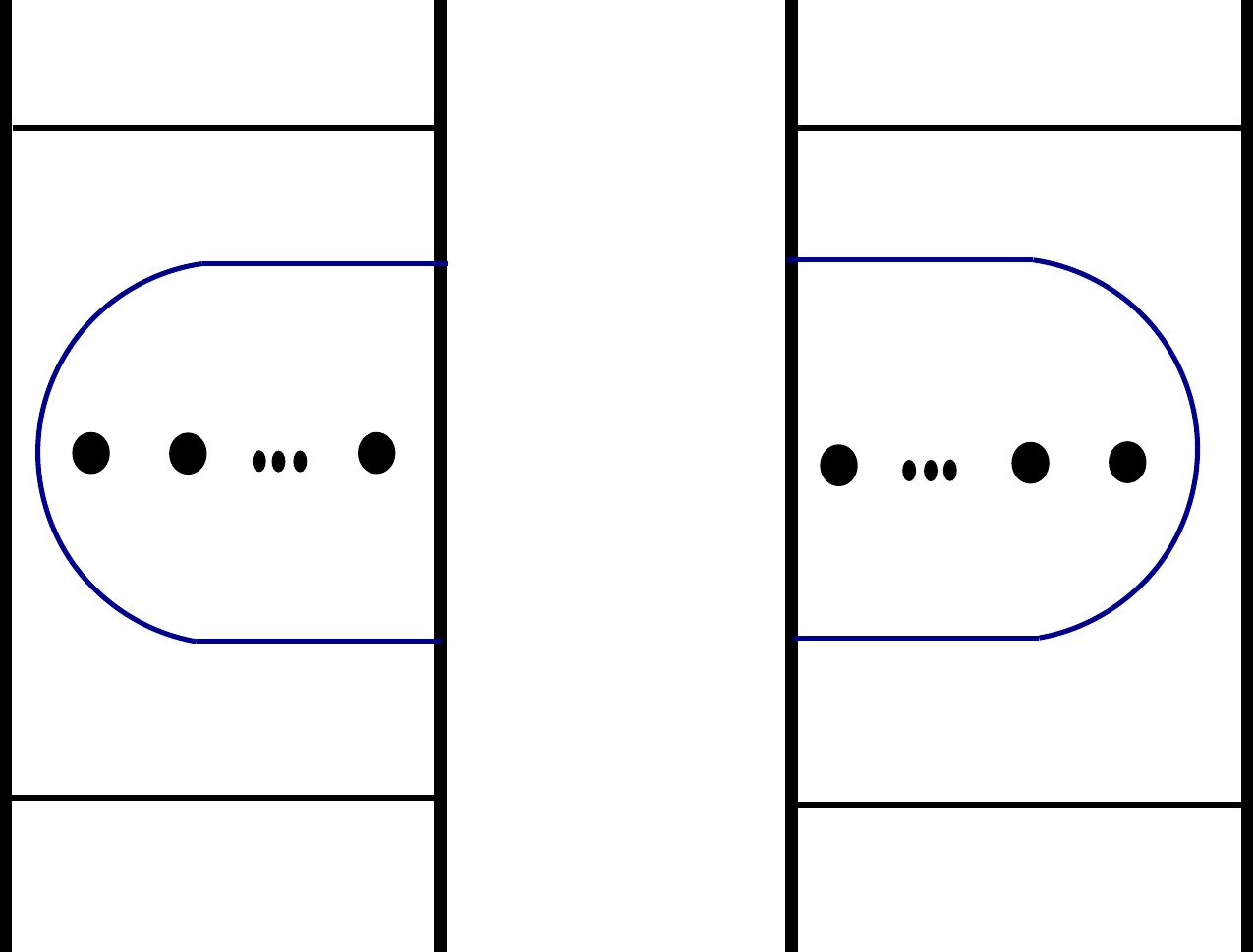}
  \caption{Large left loop, $L_{i,j}(c)$ and large right loop, $R_{i,j}(c)$  of $c \cap\Delta_{i,j}$}\label{fig:lloops}
 \end{figure}

\begin{rem}\label{rem:signs}
 For any $c\in \mathcal{S}_n$
 \begin{itemize}
     \item  $b_{i-1} (c)<0$ if and only if $c$ has a left loop in $\Delta_i$,
     \item $b_{i-1}(c)>0$ if and only if $c$ has a right loop in $\Delta_i$,
     \item $b_{i-1}(c)=0$, $c$ has no loops in $\Delta_i$.
 \end{itemize}  
\end{rem}

 \begin{defn}(Opposite Loops)\label{loop} Let $c_1, c_2\in \mathcal{S}_n$. We say that $c_1$ and $c_2$ have \emph{opposite loops} in $\Delta_i$ if $c_1\cap \Delta_i$ is a right loop and $c_2\cap \Delta_i$  is a left loop in $\Delta_i$ (or vice versa). Similarly, $c_1\cap\Delta_{i,j}$ and $c_2\cap \Delta_{i,j}$ have \emph{opposite large loops in $\Delta_{i,j}$} if $c_1$ has a large right loop and $c_2$ has a large left loop  in $\Delta_{i,j}$ (or vice versa). See Figure \ref{fig:oploops}. 
 \end{defn}

 \begin{figure}[htb!]
\labellist
\small\hair 2pt
\pinlabel {$\scriptstyle{i+1}$} [ ] at  311 250
\pinlabel {$\tiny{R_{i,j}(c_1)}$} [ ] at  330 350
\pinlabel {$\scriptstyle{j+1}$} [ ] at  428 250
\pinlabel {$\tiny{L_{i,j}(c_2)}$} [ ] at  420 110
\pinlabel {$\tiny{\Delta_{i,j}}$} [ ] at  360 10

 \pinlabel {$\tiny{\beta_{i}}$} [ ] at  263 460
 \pinlabel {$\tiny{\beta_{j+1}}$} [ ] at 477 460

\endlabellist  
 \includegraphics[scale=0.3]{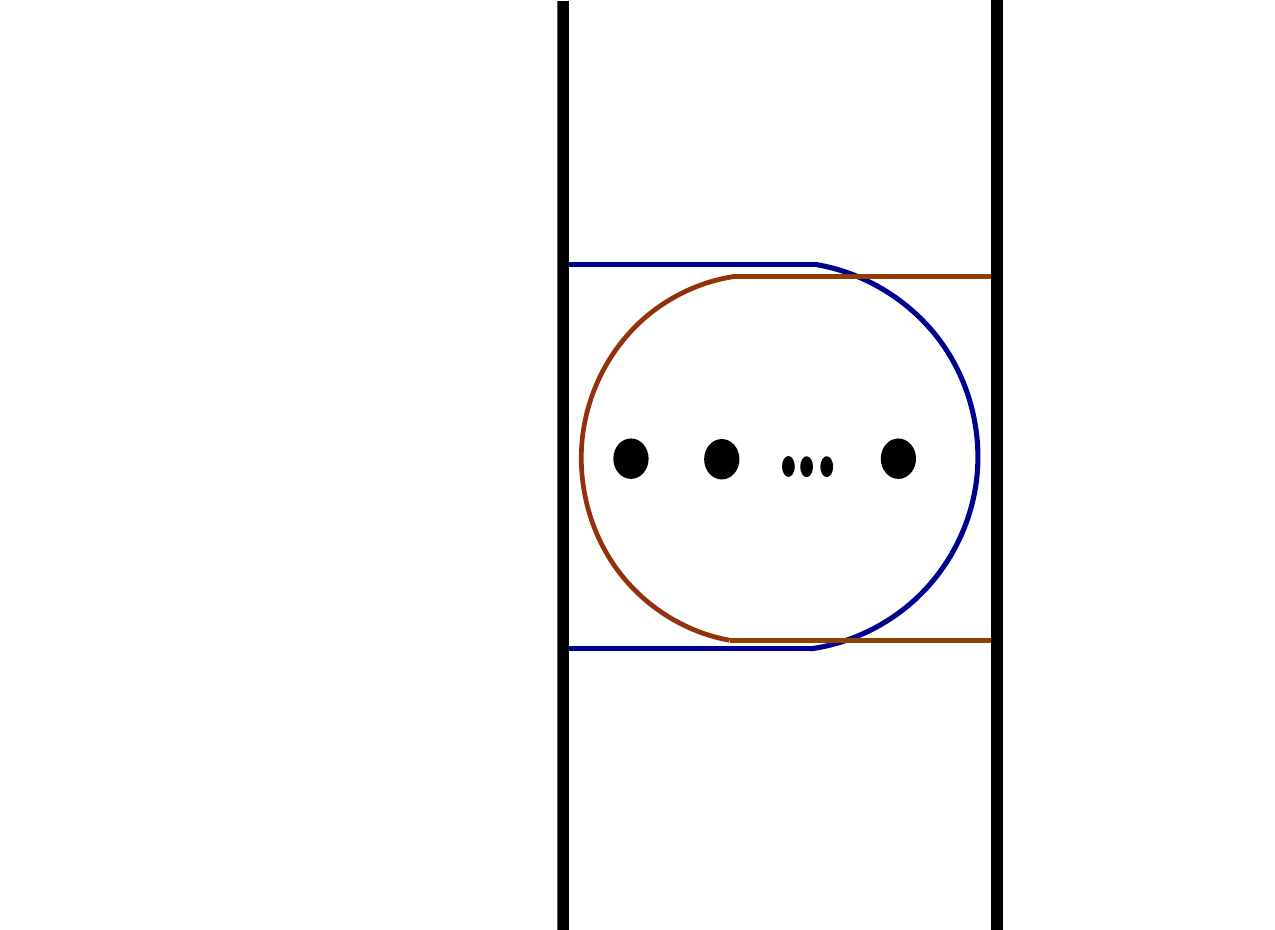}
  \caption{Opposite large loops, $R_{i,j}(c_1)$ and $L_{i,j}(c_2)$ in $\Delta_{i,j}$}\label{fig:oploops}
 \end{figure}

 Let $(a;b)\in \mathbb{Z}^{2n-4}\setminus\{0\}$ and $(a';b')\in \mathbb{Z}^{2n-4}\setminus\{0\}$ be the Dynnikov coordinates of $c_1$ and $c_2$ respectively. Using Remark \ref{rem:signs} we can tell whether or not $c_1$ and $c_2$ have opposite loops in $\Delta_i$ from their Dynnikov coordinates. More precisely, we observe that $c_1$ and $c_2$ have opposite loops in $\Delta_i$ if and only if $b_ib'_i<0$ and also $c_1$ and $c_2$ have opposite large loops in $\Delta_{i,j}$ if and only if $R_{i,j}(c_1)\neq 0$ and $L_{i,j}(c_2)\neq 0$ (or vice versa).\end{defn}
 \begin{rem}\label{rem:specialcase}
 Note that we can interpret the numbers  $R_{i,j}$ and $L_{i,j}$ for the case where $i=j$ as the number of right and left loops of $c$ in $\Delta_i$. That is,  $R_{i,i}\neq 0$ if and only if $b_i(c)>0$ and  $L_{i,i}\neq 0$ if and only if $b_i(c)<0$.   
\end{rem}

 %\begin{defn}\label{def:wrto}
   %  If $c$ and $c_{u,w}$ has an opposite large left (or right) loop about punctures $w-r+1$ through $w$ (or punctures $u$ through $u+r-1$), we say $c$ has an \emph{opposite large left (right) loop with respect to the relaxed curve $c_{u,w}$} (see Figure \ref{fig:largeloops}).
 %\end{defn}

% \begin{defn} (List)  \label{list}
%Let $\mathcal{C}=\{c_1, c_2, \ldots , c_k\}$ be a set of curves in $D_n$. We can assign a list of (large) right and left loop components. Let
%$List(c_k)=\{(i,j)|\ R_{i,j}(c_k)\neq 0\  \text{or}\  L_{i,j}(c_k)\neq 0\}$. 
%\end{defn}

%Consider the set $\mathcal{C}=\{c_1, c_2, \ldots , c_t\}$ of curves in $D_n$. If each pair of curves $c_i, c_j\in \mathcal{C}$ is either disjoint or oppositely intersecting we say that $\mathcal{C}$ is an opposite intersefamily

%be a set of  curves in $D_n$. If for each pair of intersecting curves $c_k, c_l$ in $\mathcal{C}$ there exist $(i,j)$ such that $R_{i,j}(c_k)\neq 0, L_{i,j}(c_l)\neq 0$ or $R_{i,j}(c_l)\neq 0, L_{i,j}(c_k)\neq 0$, then  the collection  $\mathcal{C}=\{c_1, c_2, \ldots , c_t\}$ is called a family of opposite curves.
\subsection{Family of opposite curves} In this section we  introduce \emph{ opposite curves} which is used in the statement of Theorem \ref{thm:main}. Here and in what follows we denote by $\iota(c_1,\,c_2)$  the geometric intersection number of two curves $c_1$ and $c_2$ (the minimum number of intersections between two representatives $\alpha_1\in c_1$ and $\alpha_2 \in c_2$).

\begin{defn}(Opposite Curves) \label{opposite curves}Let $c_1,c_2\in \mathcal{S}_n$. We call $c_1$ and $c_2$ opposite curves if $c_1$ and $c_2$ have opposite loops in some $\Delta_{l,m}$. 
\end{defn}
 Note that if $c_1$ and $c_2$ are opposite curves then $\iota(c_1, c_2)\geq 2$.
 \begin{defn}(Family of Opposite Curves) \label{opposite family} We say that $\mathcal{C}=\{c_1, c_2, \ldots, c_k\}$ is a family of opposite curves if the following condition holds: if $c_i,c_j\in \mathcal{C}$ then either $\iota(c_i,c_j)=0$ or $c_i$ and
$ c_j$ are opposite curves. If the latter condition holds for each pair of curves in $\mathcal{C}$  then $\mathcal{C}$ is called a \emph{maximal family of opposite curves} (see Figure \ref{fig:oppositefamily}).
\end{defn}

\section{Tools for the Main Theorem}\label{tools}

\subsection{Complete partition and decisive sets}
In this subsection we define the  \emph{complete partition} of a set of curves which was first introduced in \cite{humphries}. Then we introduce \emph{decisive sets} for curves to describe the sets $X_i$ given in Definition \ref{def:x_i} which play a crucial role in the proof of our main theorem.

\begin{defn} (Complete Partition)\label{complete_partition} Let $\mathcal{C}=\{c_1, c_2, \ldots , c_k\}$ be a set  curves in $D_n$. Then  the collection  $ \mathcal{P}=\{P_1, P_2, \ldots , P_m \}$ (where each $P_i\subseteq \mathcal{C}$) is called a complete partition of $\mathcal{C}$ if the following conditions are satisfied:

\begin{enumerate}
    \item $\iota(c_i,\,c_j)=0$ for all $c_i,\,c_j \in P_q$ ($1\leq q\leq m $).
    \item $\iota(c_i,\,c_j)\geq 2$ for all $c_i \in P_q$ and $c_j \in P_r$ ($q\neq r$).
    \item $\sum^{m}_{i=1} |P_i|=k$ where $|P_i|$ denotes the the cardinality of $P_i$.
\end{enumerate}
We call each $P_i$ a partition set of $\mathcal{P}$.

\end{defn}
\begin{notation}
\label{list}
Let $c\in \mathcal{S}_n$. We will write $\List(c)$ for the set of all  right and left loops of $c\cap \Delta_{i,j}\ (0\leq i \leq j \leq n-1 )$.

\end{notation}

\begin{defn}\label{def:predec}
Let $\mathcal{C}=\{c_1, c_2, \ldots , c_k\}$ be a set of family of opposite curves in $D_n$ and  $ \mathcal{P}=\{P_1, P_2, \ldots , P_m \}$ be a complete partition of $\mathcal{C}$. Let $c_1$ and $c_2$ be two curves in $D_n$ which belong to two different partition sets. Let 
\begin{align}
L_{c_2}(c_1)&=\{L_{i,j}| L_{i,j}\in \List(c_1)\ \text{and}\ R_{i,j}\in \List(c_2)\}\\
R_{c_2}(c_1)&= \{R_{i,j}| R_{i,j}\in \List(c_1)\ \text{and}\ L_{i,j}\in \List(c_2)\}
\end{align}

\noindent We write $\List_{c_2}(c_1)= L_{c_2}(c_1)\bigcup R_{c_2}(c_1) $. That is, $\List_{c_2}(c_1)$ consists of loops of $c_1$, that are opposite with $c_2$ (see Figure \ref{fig:oppositefamily} and Example \ref{ex:dec}).
\end{defn}

\begin{defn}(Decisive Set)\label{def:dec}
Let $\mathcal{C}=\{c_1, c_2, \ldots , c_k\}$ be a set of family of opposite curves in $D_n$ and  $ \mathcal{P}=\{P_1, P_2, \ldots , P_m \}$ be a complete partition of $\mathcal{C}$. Let $c_i\in P_k$. Then a decisive set $\Dec(c_i)$ for $c_i$ is defined as
\[\Dec(c_i)=\bigcup_{c_j\notin{P_k}}List_{c_j}(c_i).\] 
   
\end{defn}

\begin{defn}\label{def:x_i}
Let $\mathcal{C}=\{c_1, c_2, \ldots , c_k\}$ be a family of opposite curves in $D_n$. Suppose $ \mathcal{P}=\{P_1, P_2, \ldots , P_m \}$  be a complete partition of $\mathcal{C}$. In the following definition let $c=\alpha(c_1)$ for some freely reduced word $\alpha\in \langle t_{c_1}, \ldots, t_{c_k} \rangle $. We define

\[X_i=\{c| \Dec(c_r) \subseteq \List(c)\text{\ for some\ } c_r\in P_i\}.\] 

\end{defn}

\begin{figure}[ht]

     \begin{center}

     \labellist
\small\hair 2pt
\pinlabel {$\tiny{c_{1}}$} [ ] at  60 120
\pinlabel {$\tiny{{1}}$} [ ] at  120 120
\pinlabel {$\tiny{{2}}$} [ ] at  195 120
\pinlabel {$\tiny{{3}}$} [ ] at  275 120
\pinlabel {$\tiny{{4}}$} [ ] at  370 120
\pinlabel {$\tiny{{5}}$} [ ] at  440 120
\pinlabel {$\tiny{{6}}$} [ ] at  523 130
\pinlabel {$\tiny{c_2}$} [ ] at  175 95
\pinlabel {$\tiny{c_3}$} [ ] at  310 138
\pinlabel {$\tiny{c_4}$} [ ] at  468 100

\pinlabel {$\tiny{\beta_{1}}$} [ ] at  150 240
\pinlabel {$\tiny{\beta_{2}}$} [ ] at  230 240
\pinlabel {$\tiny{\beta_{3}}$} [ ] at  310 240
\pinlabel {$\tiny{\beta_{4}}$} [ ] at  390 240
\pinlabel {$\tiny{\beta_{5}}$} [ ] at  470 240

\pinlabel {$\tiny{\Delta_{0}}$} [ ] at  120 25
\pinlabel {$\tiny{\Delta_{1}}$} [ ] at  190 25
\pinlabel {$\tiny{\Delta_{2}}$} [ ] at  270 25
\pinlabel {$\tiny{\Delta_{3}}$} [ ] at  350 25
\pinlabel {$\tiny{\Delta_{4}}$} [ ] at  430 25
\pinlabel {$\tiny{\Delta_{5}}$} [ ] at 510 25

\endlabellist 
\includegraphics[scale=0.7]{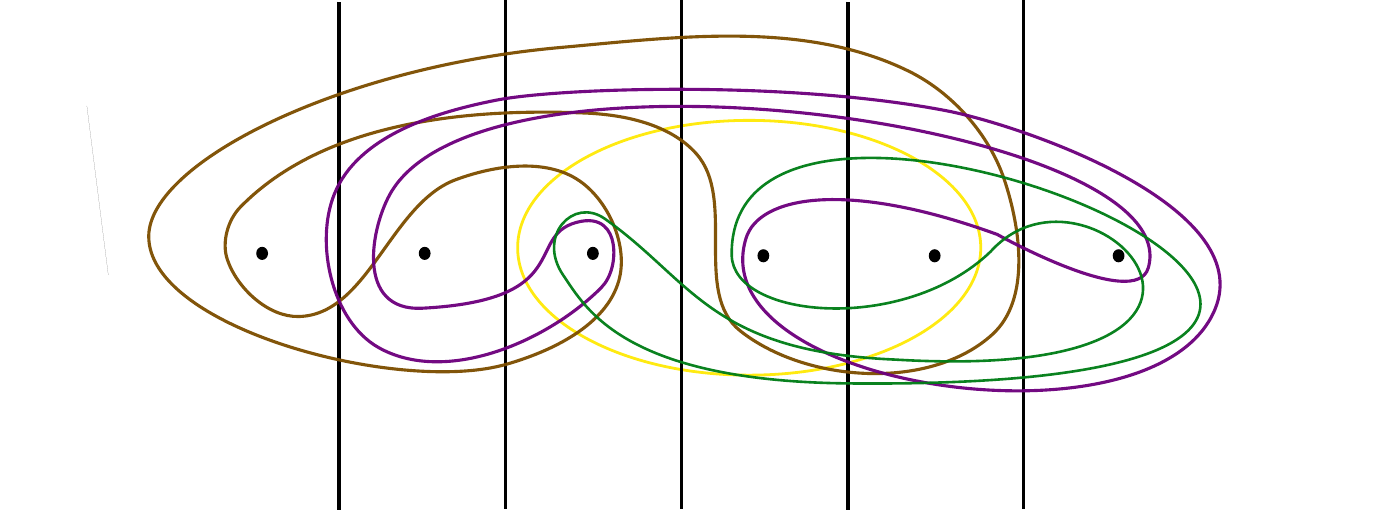}
\caption{ Maximal family of opposite curves}\label{fig:oppositefamily}
\end{center}
   \end{figure}

\begin{ex}\label{ex:dec}

Let $c_1, c_2, c_3$ and $c_4$ be the colored curves depicted brown, purple, green and yellow in Figure \ref{fig:oppositefamily}. Then $\mathcal{C}= \{c_1, c_2, c_3, c_4\}$ 
is a maximal opposite family of curves by Definition \ref{opposite family}. Also, the set  $ \mathcal{P}=\{P_1, P_2, P_3 , P_4 \}$ is a complete partition of $\mathcal{C}$ where  $ P_1=\{c_1 \}$,  $ P_2=\{c_2 \}$, $ P_3=\{c_3 \}$ and $ P_4=\{c_4 \}$ from Definition \ref{complete_partition}.  Following Definition \ref{list} we find that
\begin{align*}
  \List(c_1) &= \{L_{0,1}, R_{2,2}, R_{3,4}, R_{4,4} \}, \\
  \List(c_2) &= \{L_{1,1}, R_{2,2}, L_{3,4}, R_{4,5}, R_{5,5}  \},\\
  \List(c_3) &= \{L_{2,2}, L_{3,3}, L_{3,4}, R_{4,5}, R_{5,5}\},\\
  \List(c_4) &= \{L_{2,2}, L_{2,3}, R_{3,4}, R_{4,4}  \}.
\end{align*}

Then for $c_1$ we have:
\begin{align*}
L_{c_2}(c_1)&= \emptyset , &    R_{c_2}(c_1)&= \{R_{3,4} \}, &   List_{c_2}(c_1) &= \{R_{3,4} \}\\
L_{c_3}(c_1)&= \emptyset, &     R_{c_3}(c_1)&= \{R_{2,2}, R_{3,4} \} , &   List_{c_3}(c_1) &= \{R_{2,2}, R_{3,4} \}\\
L_{c_4}(c_1)&= \emptyset , &    R_{c_4}(c_1)&= \{ R_{2,2} \} , &   List_{c_4}(c_1) &= \{R_{2,2} \} 
\end{align*}
Therefore,
\[\Dec(c_1)= \{R_{2,2}, R_{3,4} \}. \]
We similarly compute $\Dec(c_2)= \{R_{2,2}, L_{3,4} \} $, $\Dec(c_3)= \{L_{2,2}, L_{3,4} \} $ and $\Dec(c_4)= \{L_{2,2}, R_{3,4} \}. $

Next we compute the sets $X_1, X_2, X_3, X_4$. Let $c=\alpha(c_1)$ for some freely reduced word $\alpha\in \langle t_{c_1}, t_{c_2},t_{c_3}, t_{c_4} \rangle $. Then Definition \ref{def:x_i} gives that 

\[ \displaystyle X_1   =\{  c \ | \{R_{2,2},R_{3,4}\}\subset \List(c) \},\]

\[ \displaystyle X_2   =\{  c \ | \{R_{2,2},L_{3,4}\}\subset \List(c) \},\]

\[ \displaystyle X_3   =\{  c \ | \{L_{2,2},L_{3,4}\}\subset \List(c) \},\]

\[ \displaystyle X_4   =\{  c \ | \{L_{2,2},R_{3,4}\}\subset \List(c) \}.\]

\end{ex}

\section{Main Results} \label{thmSection}

In this section, we give a preliminary proposition and two versions of the Ping-Pong Lemma before we  prove Theorem \ref{thm:main}. We then give two illustrative examples. In the first example we show that
the group generated by Dehn twists about the curves introduced in 
Example $1$ is isomorphic to $F_4$. And in the second example we study a family of curves which is not covered in \cite{humphries} since it does not satisfy the conditions in Theorem $2.1$  in the same paper. Therefore, this example is important to compare our approach with the one in \cite{humphries}.

The next proposition says that if $c_1$ and $c_2$ are two opposite curves that have opposite loops in some region $\Delta_{i,j}$ (or $\Delta_i$ if the loops aren't large)  then twisting $c_1$ along $c_2$  $p$ times, $p\neq 0$, creates a new curve $t^p_{c_2}(c_1)$ which has the same type of loop as $c_2$ in $\Delta_{i,j}$.   
\begin{prop}\label{prop}
Given $c_1, c_2\in \mathcal{S}_n$, let $R_{i,j}(c_1)\neq 0$ and $L_{i,j}(c_2)\neq 0$. Then $R_{i,j}(t^p_{c_1}(c_2))\neq 0$ and $L_{i,j}(t^p_{c_2}(c_1))\neq 0$ for all $p\neq 0$. 
\end{prop}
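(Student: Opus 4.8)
The statement is symmetric in the roles of $c_1$ and $c_2$ (twist a right loop along a left loop, or vice versa), so it suffices to prove that if $R_{i,j}(c_1)\neq 0$ and $L_{i,j}(c_2)\neq 0$, then $L_{i,j}(t^p_{c_2}(c_1))\neq 0$ for all $p\neq 0$; the other claim follows by interchanging the names of the curves. The plan is to argue geometrically, working inside the sub-disk $\Delta_{i,j}$, and to track what the Dehn twist $t_{c_2}$ does to a large right loop of $c_1$ there.

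\textbf{Step 1: localize the picture.} First I would restrict attention to the region $\Delta_{i,j}$ and to the essential pieces of $c_1$ and $c_2$ meeting it. By hypothesis $c_1$ contributes at least one large right loop $R_{i,j}(c_1)$ — an arc with both endpoints on $\beta_i$ that crosses the horizontal diameter only between punctures $j+1$ and $j+2$ — and $c_2$ contributes at least one large left loop $L_{i,j}(c_2)$, an arc with both endpoints on $\beta_{j+1}$ crossing the diameter only between punctures $i$ and $i+1$. The key observation to establish here is that such a large right loop and such a large left loop must intersect (in minimal position) in at least two points inside $\Delta_{i,j}$: their endpoints lie on opposite sides ($\beta_i$ versus $\beta_{j+1}$) and they "cross over" on opposite sides of the block of punctures $i+1,\dots,j+1$, so neither can be isotoped off the other within $\Delta_{i,j}$. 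This is essentially the content of Definition \ref{loop} together with the remark following Definition \ref{opposite curves} that opposite curves have geometric intersection number at least $2$.

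\textbf{Step 2: follow the twist.} Next I would analyze $t^p_{c_2}(c_1)$ near $\Delta_{i,j}$. Since $c_2$ has a left loop configuration in $\Delta_{i,j}$, an annular neighborhood of (the relevant component of) $c_2$ meets $\Delta_{i,j}$ in a band whose core is a large left loop. Cutting $c_1$ along this band and re-gluing with $p$ extra twists, each of the (at least two) intersection points of $R_{i,j}(c_1)$ with $c_2$ gets replaced by a parallel copy of the $c_2$-arc, traversed $|p|$ times and with orientation determined by $\mathrm{sign}(p)$. The outcome is that $t^p_{c_2}(c_1)$ acquires, inside $\Delta_{i,j}$, arcs that run parallel to the large left loop of $c_2$; I would check that after pulling tight these produce a genuine large left loop of $t^p_{c_2}(c_1)\cap\Delta_{i,j}$ — i.e. an arc with both endpoints on $\beta_{j+1}$ crossing the diameter only between punctures $i$ and $i+1$ — and that no isotopy supported in $\Delta_{i,j}$ can remove it. Equivalently, in Dynnikov-coordinate language one can phrase this via Remark \ref{rem:signs} and Remark \ref{rem:specialcase}: one shows the sign pattern of the relevant $b$-coordinates (or the nonvanishing of the $L_{i,j}$ count) of $t^p_{c_2}(c_1)$ matches that of $c_2$; but I expect the cleanest argument is the direct surgery picture above, with the taut/minimal-position bookkeeping done carefully.

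\textbf{Main obstacle.} The delicate point is the minimality/tautness bookkeeping in Step 2: a priori the twisted curve need not be drawn in minimal position with respect to the arcs $\alpha_\bullet, \beta_\bullet$, so one must argue that the parallel copies of the $c_2$-arc survive tightening and are not cancelled against other parts of $c_1$ lying outside $\Delta_{i,j}$. The way I would handle this is a bigon / innermost-disk argument: any bigon between $t^p_{c_2}(c_1)$ and $\beta_{j+1}$ removing one of these arcs would, after undoing the twist, produce a bigon involving $c_1$, $c_2$, and $\beta_{j+1}$, contradicting that $R_{i,j}(c_1)$ and $L_{i,j}(c_2)$ were already taut and that $|p|\neq 0$ forces at least one full wrap to remain. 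Once the surviving large left loop is exhibited, $L_{i,j}(t^p_{c_2}(c_1))\neq 0$ follows, and symmetry gives $R_{i,j}(t^p_{c_1}(c_2))\neq 0$.
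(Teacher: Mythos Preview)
Your proposal is correct and follows essentially the same approach as the paper: both argue via the standard curve-surgery picture that twisting along a curve with a given loop type in $\Delta_{i,j}$ forces the image to acquire that same loop type, and both dispatch the second claim by symmetry. The paper's version is terser---it relies almost entirely on two figures and a one-line observation that iterated twists increase the intersection with the $\beta$-arc on one side of $\Delta_{i,j}$ while leaving the other side invariant---whereas you are more explicit about the tautness/bigon bookkeeping, which the paper does not spell out.
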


\begin{proof}
Let $R_{i,j}(c_1)\neq 0$ and $L_{i,j}(c_2)\neq 0$. Then $c_1$ and $c_2$ have opposite loops in $\Delta_{i,j}$. Applying a positive (or negative) Dehn twist $t_{c_1}$ on $c_2$ following the standard curve surgery illustrated in Figure \ref{fig:curvesurgery} yields a curve $c'_2$ which has a large right loop in $\Delta_{i,j}$ as shown in Figure \ref{fig:pleftsurgery}. Clearly, applying positive (negative) powers of $t_{c_1}$ on $c'_2$ increases $\beta_{i-1}$ and leaves $\beta_j$ invariant (see $t^2_{c_1}(c_2)$ shown in Figure \ref{fig:pleftsurgery}). Therefore,  $t^p_{c_1}(c_2)$ has a large right loop in $\Delta_{i,j}$ that is $R_{i,j}(t^p_{c_1}(c_2))\neq 0$. We prove $L_{i,j}(t^p_{c_2}(c_1))\neq 0$ similarly.

\end{proof}   

\begin{figure}[ht]
\labellist
\small\hair 2pt
\pinlabel {$\tiny{c_1}$} [ ] at 830 250
\pinlabel {$\tiny{c_2}$} [ ] at 830 285

\pinlabel {$\tiny{t_{c_1(c_2)}}$} [ ] at 840 210
\endlabellist  

 \includegraphics[scale=0.3]{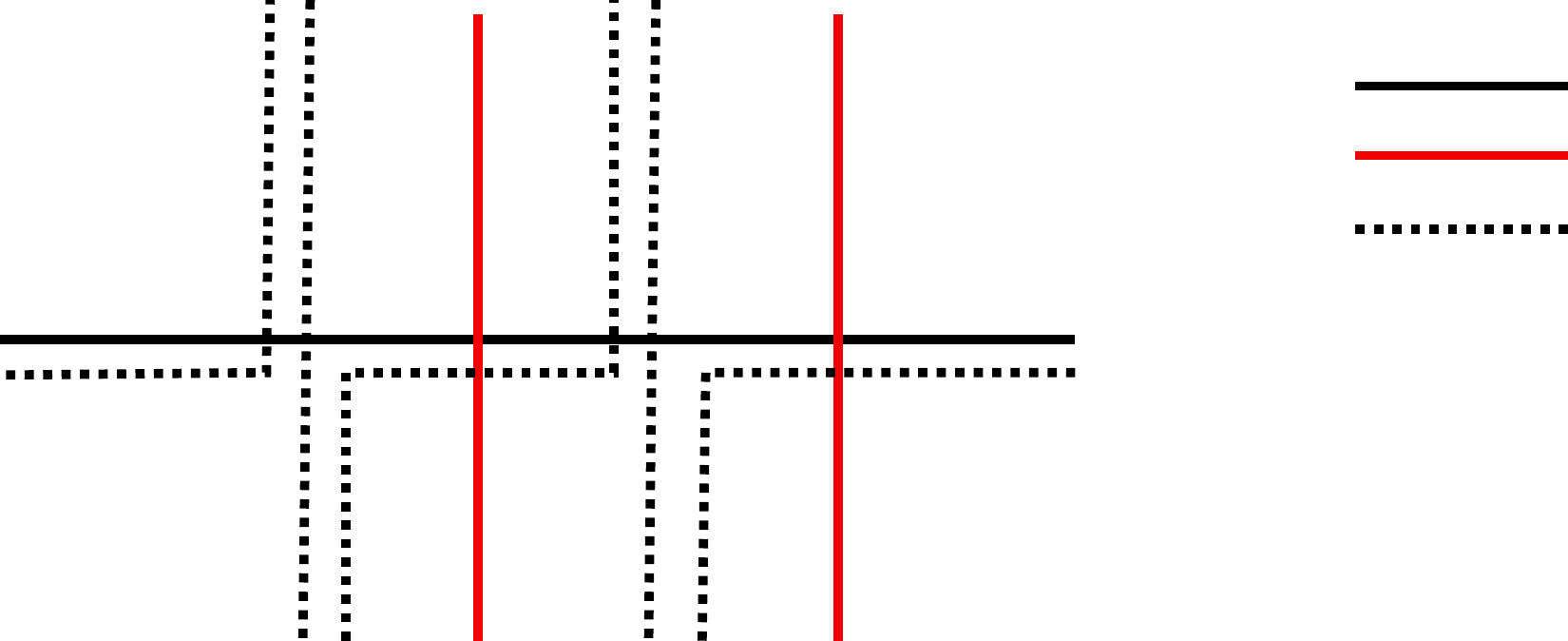}

\caption{Curve surgery}\label{fig:curvesurgery}
\vspace{0.5 cm}
     \begin{center}

     \labellist
\small\hair 2pt
\pinlabel {$\tiny{c_{1}}$} [ ] at  40 190
\pinlabel {$\tiny{c_2}$} [ ] at  146 280
\pinlabel {$\tiny{t_{c_1}(c_2)}$} [ ] at  430 260
\pinlabel {$\tiny{t^{2}_{c_1}(c_2)}$} [ ] at  670 90
\pinlabel {$\tiny{\beta_{j+1}}$} [ ] at  665
315
\pinlabel {$\tiny{\beta_{i}}$} [ ] at  592 315
\pinlabel {$\tiny{\beta_{j+1}}$} [ ] at  665 143
\pinlabel {$\tiny{\beta_{i}}$} [ ] at  592 143
\pinlabel {$\tiny{\beta_{j+1}}$} [ ] at  415 315
\pinlabel {$\tiny{\beta_{i}}$} [ ] at  343 315
\pinlabel {$\tiny{\beta_{j+1}}$} [ ] at  173 315
\pinlabel {$\tiny{\beta_{i}}$} [ ] at  100 315

\endlabellist 
\includegraphics[scale=0.70]{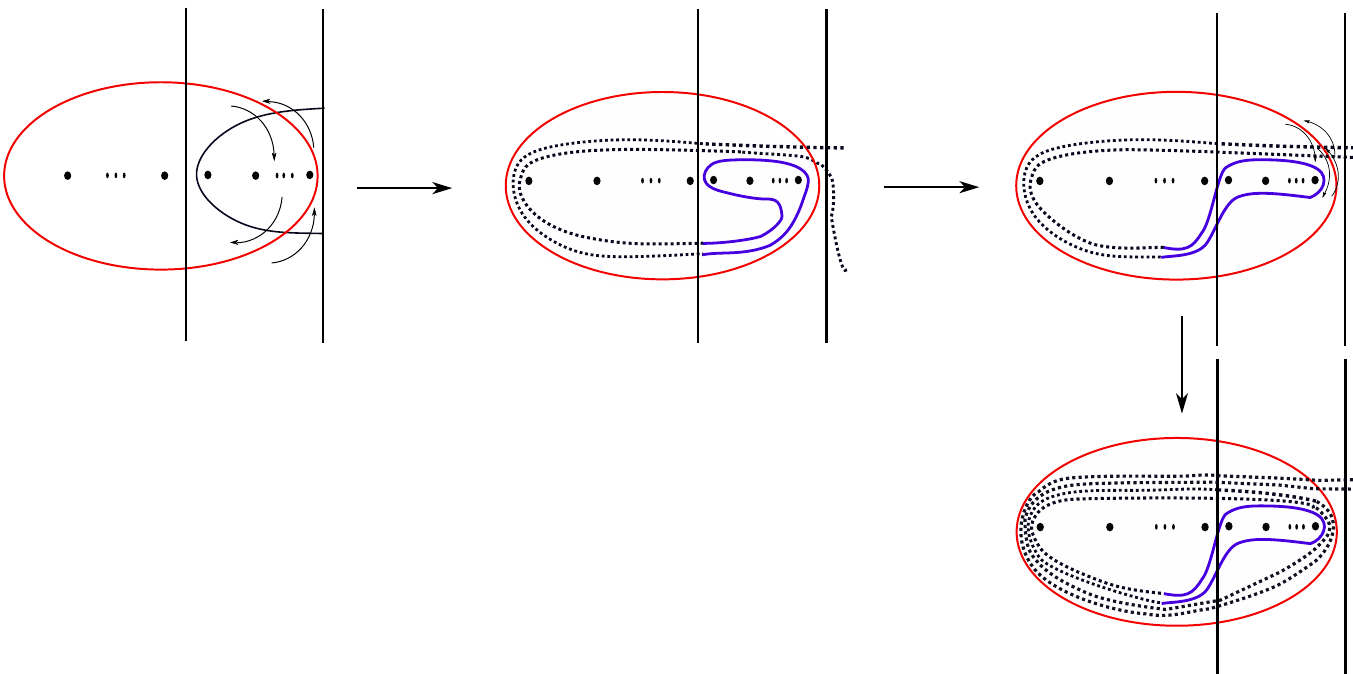}
\caption{ The large left loop of $c_2$ in $\Delta_{i,j}$ is transformed to a large right loop in $\Delta_{i,j}$ under the action of  $t_{c_{1}}$ }\label{fig:pleftsurgery}

\end{center}

   \end{figure}

In order to prove Theorem \ref{thm:main}   we  need the following two versions of the well-known Ping-Pong Lemma  (see \cite{harpe,lyndon, OHGT, olin})

%\begin{lem} \label{P1}
%Let $G$ be a group generated by subgroups $G_1, \dots, G_m$ and acting on a set $X$. Let $X_0 = \{x_0\}$, $X_1, \dots, X_m$ be disjoint subsets of $X$ such that if $t \in G_i$ and $t\neq \text{id}$, then $t(X_s) \subseteq X_i$ for all $i \neq s$. Then $G$ is the free product $G_1 * G_2 * \dots * G_m$ of the subgroups $G_1, \dots, G_m$.
%\end{lem}
\begin{lem}[\cite{olin}] \label{P1}
    
Let \( G \) be a group acting on a set \( X \) and let \( G_1, G_2, \dots, G_m \) be subgroups of \( G \) where \( m \geq 2 \), such that at least one of these subgroups has order greater than $2$. Suppose there exist pairwise disjoint nonempty subsets \( X_1, X_2, \dots, X_m \) of \( X \) such that the following holds: For any \( i \neq s \) and for any \( g \in G_i \), \( g \neq 1 \), we have \( g(X_s) \subseteq X_i \). Then
\(G\) is isomorphic to \(G_1 \ast \dots \ast G_m.\)
\end{lem}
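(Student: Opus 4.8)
\textbf{Proof plan for Lemma \ref{P1}.}

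The statement is the classical Ping-Pong Lemma for free products, so the plan is to reproduce the standard argument. The plan is to construct the canonical homomorphism $\varphi\colon G_1 \ast \cdots \ast G_m \to G$ induced by the inclusions $G_i \hookrightarrow G$, and to show that $\varphi$ is injective; since the $G_i$ already generate (or rather, since we only claim $G \cong G_1 \ast \cdots \ast G_m$ when $G$ is the subgroup generated by them, which is the intended reading), surjectivity onto $\langle G_1,\dots,G_m\rangle$ is automatic, and injectivity is the whole content. To prove injectivity, take a nontrivial element $w \in G_1 \ast \cdots \ast G_m$ and write it in normal form $w = g_1 g_2 \cdots g_\ell$ with $\ell \geq 1$, each $g_t \neq 1$, $g_t \in G_{i_t}$, and $i_t \neq i_{t+1}$ for consecutive letters. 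The goal is to show $\varphi(w) \neq 1$ in $G$, which I will do by exhibiting a point of $X$ that $\varphi(w)$ moves, or more precisely by tracking $\varphi(w)(X_s)$ for a suitably chosen index $s$.

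The key steps, in order: \emph{(1)} First handle the case $\ell \geq 2$. Choose an index $s \neq i_1$; such an index exists because $m \geq 2$. Then apply the ping-pong hypothesis repeatedly from right to left: $g_\ell(X_s) \subseteq X_{i_\ell}$ since $i_\ell \neq s$ (for $\ell=1$ this needs $s\ne i_1$, for the inductive step one uses $i_{t} \ne i_{t+1}$), then $g_{\ell-1}(X_{i_\ell}) \subseteq X_{i_{\ell-1}}$, and so on, until $\varphi(w)(X_s) = g_1 \cdots g_\ell (X_s) \subseteq X_{i_1}$. Since $X_{i_1} \cap X_s = \emptyset$ and $X_s \neq \emptyset$, we cannot have $\varphi(w)$ equal to the identity. \emph{(2)} Next handle the degenerate case $\ell = 1$, i.e.\ $w = g_1 \in G_{i_1}$, $g_1 \neq 1$. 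If some other $X_s$ with $s \neq i_1$ gets mapped into $X_{i_1}$ this already shows $g_1 \neq 1$ in $G$ as before; the point of the ``order greater than $2$'' hypothesis is to make the conjugation trick below work uniformly. \emph{(3)} The subtle case is when $w = g_1 \cdots g_\ell$ with $\ell \geq 2$ but $i_1 = i_\ell$ (a cyclically non-reduced word), so that the right-to-left argument ends in $X_{i_1} = X_{i_\ell}$ and gives no contradiction directly. The standard fix: pick $h \in G_r$ with $r \neq i_1$ and $h \neq 1$ — and if $G_r$ has order $> 2$, pick $h$ so that $h \neq h^{-1}$ is not needed, but we do need $r$ chosen so $G_r$ is the large subgroup when necessary — then consider the conjugate $h w h^{-1}$, whose normal form now starts and ends with letters in $G_r$ with the middle unchanged; apply step (1) to $hwh^{-1}$ to conclude $\varphi(hwh^{-1}) \neq 1$, hence $\varphi(w) \neq 1$.

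I expect the main obstacle to be the bookkeeping in step (3): verifying that $hwh^{-1}$ is genuinely in reduced normal form (the conjugation does not cause cancellation in the interior because $r \neq i_1$ forces $h g_1$ and $g_\ell h^{-1}$ to be honest length-two concatenations), and, more delicately, seeing exactly where the hypothesis ``at least one $G_i$ has order $> 2$'' is used. That hypothesis is what guarantees we can always find an index $r$ and a suitable nontrivial $h \in G_r$ to conjugate by even in edge cases — for instance when $m = 2$ and both the word and the available conjugating subgroup interact badly — ruling out the well-known counterexample $\mathbb{Z}/2 \ast \mathbb{Z}/2$ acting on a two-point set. Once the normal-form reduction is in place the disjointness of the $X_i$ closes the argument immediately, so I would spend most of the writeup being careful about which index to conjugate by and why the resulting word is reduced.
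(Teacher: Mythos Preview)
The paper does not supply a proof of this lemma; it is quoted from \cite{olin} and invoked as a black box in the proof of Theorem~\ref{thm:main}, so there is nothing in the paper to compare your argument against.

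Your outline contains the right ingredients, but the two main cases are interchanged. In step~(1) you choose $s\neq i_1$ and then claim $g_\ell(X_s)\subseteq X_{i_\ell}$ ``since $i_\ell\neq s$''; however $s\neq i_1$ does not force $s\neq i_\ell$. The direct right-to-left chain works precisely when $i_1=i_\ell$: then any $s\neq i_1$ is automatically $\neq i_\ell$, the chain runs through, and $\varphi(w)(X_s)\subseteq X_{i_1}$ together with $X_{i_1}\cap X_s=\emptyset$ finishes the argument. The case that genuinely requires the conjugation trick is $i_1\neq i_\ell$, and really only when $m=2$ (for $m\ge 3$ one simply picks $s\notin\{i_1,i_\ell\}$). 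There, letting $G_j$ be the subgroup of order greater than~$2$, one conjugates by some $h\in G_j$ chosen so that no cancellation occurs at either end of $h\,w\,h^{-1}$ (i.e.\ $h\neq g_1^{-1}$ if $j=i_1$, and $h\neq g_\ell$ if $j=i_\ell$); the hypothesis $|G_j|>2$ is exactly what guarantees such an $h$ exists. The conjugated word then begins and ends in $G_j$, and the easy case applies. Your step~(3), by contrast, conjugates an $i_1=i_\ell$ word by $h\in G_r$ with $r\neq i_1$, producing another word whose first and last indices again coincide---so it does not address the case that is actually problematic. Swap the roles of the two cases and pin down where the order-greater-than-$2$ condition is used, and the proof is complete.
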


\begin{lem}[\cite{OHGT}] \label{P2}
Suppose $\{g_1, g_2, \dots, g_k\}$ generates a group $G$, which acts on a set $X$. If
\begin{enumerate}
    \item $X$ has pairwise disjoint nonempty subsets $\{X_1, \dots, X_n\}$, and
    \item $g_i^p(X_j) \subseteq X_i$ for all non--zero powers $p$ and $i \neq j$,
\end{enumerate}
then $G$ is a free group of rank $k$.
\end{lem}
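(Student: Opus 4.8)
The plan is to show that the canonical surjection from the free group $F_k$ of rank $k$ onto $G$, sending a free basis to $g_1, \dots, g_k$, is injective; equivalently, that no nontrivial freely reduced word in the generators acts as the identity on $X$. Throughout I take the index set of the subsets to agree with that of the generators (so $n = k$, with $X_i$ attached to $g_i$), and I assume $k \geq 2$ so that hypothesis (2) is non-vacuous. A nontrivial reduced word has syllable form $w = g_{i_1}^{p_1} g_{i_2}^{p_2} \cdots g_{i_r}^{p_r}$ with each $p_\ell \neq 0$ and $i_\ell \neq i_{\ell+1}$ for $1 \le \ell < r$. It then suffices to exhibit a point of $X$ moved by $w$.

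First I would treat the case $i_1 = i_r$ (which includes the single-syllable case $r = 1$). Choose any index $s$ with $s \neq i_1$, which exists because $k \geq 2$. Reading $w$ from the right and applying hypothesis (2) repeatedly: the innermost step $g_{i_r}^{p_r}(X_s) \subseteq X_{i_r}$ is licensed by $s \neq i_r = i_1$, and thereafter each consecutive pair of indices differs, so the telescoping inclusions $g_{i_\ell}^{p_\ell}(X_{i_{\ell+1}}) \subseteq X_{i_\ell}$ carry the image up the word to give $w(X_s) \subseteq X_{i_1}$. Since $X_s$ is nonempty and disjoint from $X_{i_1}$, any $x \in X_s$ satisfies $w(x) \in X_{i_1}$, hence $w(x) \neq x$. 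Thus $w$ does not act as the identity on $X$, so $w \neq 1$ in $G$.

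The hard part is the remaining case $i_1 \neq i_r$, where there need not be a spare index to serve as the starting set (for instance when $k = 2$), so the direct ping-pong above cannot be launched. I would resolve this by conjugating to restore matching end-indices: set $w' = g_{i_1}^{m} \, w \, g_{i_1}^{-m}$ for an integer $m \notin \{0, -p_1\}$ (say $m = 1$ when $p_1 \neq -1$, and $m = 2$ otherwise). Because $i_1 \neq i_2$ and $i_r \neq i_1$, no cancellation occurs at the seams, and the choice $m + p_1 \neq 0$ keeps the leading syllable $g_{i_1}^{m+p_1}$ nontrivial, so $w'$ is again a nontrivial reduced word, now with first and last syllable both indexed by $i_1$. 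The previous case yields $w' \neq 1$, and since $w'$ is a conjugate of $w$ this forces $w \neq 1$.

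Combining the two cases, every nontrivial reduced word acts nontrivially on $X$, so the surjection $F_k \to G$ is injective and $G$ is free of rank $k$. The only genuine subtlety is the conjugation step; once the end-indices are aligned, the telescoping inclusions from hypothesis (2) do all the work, and the disjointness of the $X_i$ converts ``the image lands in the wrong set'' into ``the word is not the identity.'' A minor point worth recording is the standing convention $n = k$ (with $X_i$ paired to $g_i$) together with $k \geq 2$, without which the statement as indexed is not literally well-posed.
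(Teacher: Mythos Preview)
Your proof is correct and follows the standard ping-pong argument: telescope the inclusions from hypothesis (2) when the outermost indices agree, and conjugate to arrange this when they do not. Note, however, that the paper does not supply its own proof of this lemma; it is quoted verbatim from the cited reference \cite{OHGT}, so there is no in-paper argument to compare against---your write-up simply reproduces the classical proof that the citation points to.
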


\begin{proofa}

Let \(G = \langle t_{c_1}, \ldots, t_{c_k} \rangle\). Let  $\mathcal{C}=\{c_1, c_2, \ldots c_k\}$ be a family of opposite curves on $D_n$ and $\mathcal{P}=\{P_1, P_2, \ldots , P_m \}$  be a complete partition of $\mathcal{C}$.  Define
\[
G_i = \Big\langle t_{c_j} \,\ | \ c_j \in P_i \Big\rangle \subseteq G, 1 \leq i \leq m.
\] 

Let \(X_i\) be as defined in Definition~\ref{def:x_i} and $X$ denote the set of multicurves $\mathcal{S}_n$. We first note that the sets $X_i$, \(1 \leq i \leq m\) are pairwise disjoint subsets of $X$ by definition.  For any non-trivial $g\in G_i$ we have $g(X_s)\subset X_i, i\neq s$, by Proposition \ref{prop}. It follows from Lemma \ref{P1} that \(
G \,\cong\, G_1 * G_2 * \cdots * G_m. \) Since, the curves in \(P_i\) are pairwise disjoint, we have $G_i \,\cong\, \mathbb{Z}^{n_i} \ \text{where } n_i = |P_i|$. Therefore,  
\(
G \;
    \;\cong\; \mathbb{Z}^{n_1} * \mathbb{Z}^{n_2} * \cdots * \mathbb{Z}^{n_m}.
\) Furthermore when $\mathcal{C}$ is maximal then each $P_i$ contains a single element $c_i$ and hence each $G_i$ is a cyclic subgroup  generated by $t_{c_i}$. That is if $g\in G_i$, $g$ can be written as $t^p_{c_i}$. By Proposition \ref{prop}, we have $t^p_{c_i}(X_s)\subset X_i$, for all non--zero powers $p$. It follows from Lemma \ref{P2} that $G$ is a free group of rank $k$.
\end{proofa}

\begin{ex}
Consider the  curves $\mathcal{C}= \{c_1, c_2, c_3, c_4\}$  in $D_6$ depicted in Figure \ref{fig:oppositefamily}. In this example we have a maximal  family of curves and  $ \mathcal{P}=\{P_1, P_2, P_3 , P_4 \}$  a complete partition of $\mathcal{C}$ where  $ P_1=\{c_1 \}$,  $ P_2=\{c_2 \}$, $ P_3=\{c_3 \}$ and $ P_4=\{c_4 \}$.  Let $G_1= \langle t_{c_1}\rangle$, $G_2= \langle t_{c_2} \rangle$, $G_3= \langle t_{c_3}\rangle$ and $G_4= \langle t_{c_4}\rangle$. Then by Proposition \ref{prop}, $t_{c_i}^p (X_j) \subset X_i $ for all $t_{c_i}\in G_i$ for  $p\neq 0$ and $i\neq j$. Therefore by Theorem \ref{thm:main},  we get $\displaystyle \langle t_{c_1}, t_{c_2}, t_{c_3},t_{c_4} \rangle = F_4$.

\end{ex}

We note that our result can also be applied to a family of the so-called \emph{relaxed curves} \cite{dynnikov2, yurttas3} since such a family would satisfy the conditions given in Definition \ref{opposite family}.

%\begin{rem}
%Note that by Remark \ref{rem:criterion} we can check whether or not given a collection of $k$ relaxed curves generate a free group of rank $k$ or a free product of Abelian groups purely by using Dynnikov coordinates.
%\end{rem}
\begin{defn}(Relaxed Curves)\label{defrelaxed}
 A curve  $c_{i,j}\in  D_n (i<j) $ is relaxed if it is isotopic to a simple closed curve which bounds a disk containing the set of punctures $\{i,i+1,\dots,j\}$ intersecting the horizontal diameter of the disk exactly twice (Figure \ref{fig:relaxed}). 
 \end{defn}

%%Figure 2
\begin{figure}[htb]
\labellist
\small\hair 2pt
\pinlabel {$\tiny{c_{1,l}}$} [ ] at 20 60
\pinlabel {$\tiny{c_{k,n}}$} [ ] at 100 65
\pinlabel {$\tiny{c_{m,o}}$} [ ] at 174 60
 \pinlabel {$\scriptstyle{1}$} [ ] at 16 35
 
   \pinlabel {$\scriptstyle{2}$} [ ] at 25 35
    \pinlabel {$\scriptstyle{k}$} [ ] at 55 35
    \pinlabel {$\scriptstyle{l}$} [ ] at 100 35
     \pinlabel {$\scriptstyle{m}$} [ ] at 125 35
     \pinlabel {$\scriptstyle{n}$} [ ] at 140 35
    \pinlabel {$\scriptstyle{o}$} [ ] at 180 35

\endlabellist 
\centering
 \includegraphics[scale=1]{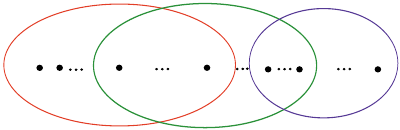}
  \caption{Relaxed curves $c_{1,l}$, $c_{k,n}$, $c_{m,o}$}\label{fig:relaxed}
 \end{figure}

%Note that $X_1 \cap X_2  \neq \emptyset $. (The  curve $c$ shown on Figure \ref{fig:fig10}, $c \in X_1 \cap X_2  $)

\begin{ex} \label{ex2}
Consider the family of relaxed curves $\mathcal{C}= \{c_1, c_2, c_3\}$ depicted in Figure \ref{fig:relaxed_2}. We have  $ \mathcal{P}=\{P_1, P_2 \}$  a complete partition of $\mathcal{C}$ where  $ P_1=\{c_1, c_3\}$ and   $ P_2=\{c_2 \}$. By Definition \ref{def:dec} we obtain that $\Dec{(c_1)}=\{R_{1,2}\},\   \Dec{(c_2)}=\{L_{1,1},\  L_{1,2}, R_{3,3}\},\  \Dec{(c_3)}=\{L_{3,3}\}$. Then, by Definition \ref{def:x_i} we get:

\[ \displaystyle X_1   =\{  c \ | \{R_{1,2}\} \subset \List(c) \ \text{or}\ \{ L_{3,3}\} \subset \List(c) \   \},\]

\[ \displaystyle X_2   =\{  c \ | \{ L_{1,2}, R_{3,3}\}\subset \List(c) \},\]
where $c= \alpha(c_1)$ for a freely reduced word $\alpha$, written in the generating set $\{ t_{c_1}, t_{c_2}, t_{c_3}\} $. Let $G_1= \langle t_{c_1}, t_{c_3}\rangle$ and $G_2= \langle t_{c_2} \rangle$. Then by Proposition  \ref{prop}, $t_{c_q}^p (X_j) \subset X_i $ for all $t_{c_q}\in G_i$ for  $p\neq 0$ and $i\neq j$. Therefore,  we get $\displaystyle \langle t_{c_1}, t_{c_2}, t_{c_3} \rangle = \mathbb {Z}^2* \mathbb {Z}$.

\begin{figure}[htb!]
 \labellist
\small\hair 2pt
\pinlabel {$\scriptstyle{c_{1}}$} [ ] at 30 65
\pinlabel {$\scriptstyle{c_{2}}$} [ ] at 100 65
\pinlabel {$\scriptstyle{c_{3}}$} [ ] at 160 65

 \pinlabel {$\scriptstyle{1}$} [ ] at 20 40
 
   \pinlabel {$\scriptstyle{2}$} [ ] at 62 37
    \pinlabel {$\scriptstyle{3}$} [ ] at 98 37
 
  \pinlabel {$\scriptstyle{4}$} [ ] at 135 37
   \pinlabel {$\scriptstyle{5}$} [ ] at 173 37

\endlabellist 
  \includegraphics[scale=1.2]{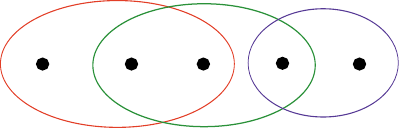}
  \caption{$P_1=\{c_1,c_3\}$ and $P_2=\left\{c_2\right\}$ give a complete partition in $D_5$ }\label{fig:relaxed_2}
 \end{figure}
\end{ex}

\section{Algorithm} \label{sec:alg-state}

\subsection{Detecting right and left loops}\label{oppositeness}
 We can explicitly compute the number of large right and left loops of $c\cap \Delta_{i,j}$  (\cite{yurttas2}, Section $3$):  Let $c\in\mathcal{S}_n$  with Dynnikov
  coordinates $(a\,;\,b)$ and intersection numbers $(\alpha\,;\,\beta)$. In order to calculate these numbers we use the additional arcs $\alpha_{-1}$ and $\alpha_0$ which join the first puncture to the boundary of the disk $\partial D_n$; and $\beta_0$ which lies on the left hand side of the first puncture and has both end points on  $\partial D_n$. We similarly define  $\alpha_{2n-3}$, $\alpha_{2n-2}$  and $\beta_n$  for the last puncture. Observe that the corresponding Dynnikov coordinates would be $a_0 = a_{n-1} = 0$ (since $\alpha_{-1} =
  \alpha_0$ and $\alpha_{2n-3}=\alpha_{2n-2})$; and $b_0=-\beta_1/2$ $b_n=\beta_{n-1}/2$ (since $\beta_0=\beta_n=0$). For each $i$ with $1\le i\le n$, we write,
  \[ A_i = \alpha_{2i-3} - |b_{i-1}| \qquad\text{and}\qquad B_i = \alpha_{2i-2}
    - |b_{i-1}|.
  \] For each $\ell$ and $m$ with $1\le \ell\le m \le n$, write
  \begin{equation*}
  \label{eq:A_i,j-B_i,j} A_{\ell, m} = \min_{\ell\le k\le m} A_k \qquad
    \text{and} \qquad B_{\ell,m} = \min_{\ell\le k\le m} B_k,
  \end{equation*}

Then,
  \begin{align*} R_{i,j} &= \min(A_{i, j-1} - A_{i,j},\,\, B_{i, j-1} - B_{i, j},\,\,
  b_{j-1}^+), \,\, \text{ and}\\ L_{i,j} &= \min(A_{i+1, j} - A_{i, j},\,\, B_{i+1,
  j} - B_{i, j},\,\, (-b_{i-1})^+),
  \end{align*} where $b_{r}^+$ is the number of right loops in $\Delta_{r+1}$.

Moreover, we can determine whether or not  $ \mathcal{C}$ is a family of opposite curves or a maximal family of opposite curves from Dynnikov coordinates.

\subsection{Test for opposite curves}\label{rem:test}
There is an algorithm to calculate the geometric intersection number of two arbitrary curves in $D_n$ given their Dynnikov coordinates which is stated in (\cite{yurttas2}, Algorithm $14$). Therefore, we can check whether two curves $c_1$ and $c_2$ are disjoint or not using Dynnikov coordinates. Furthermore, in the case where $c_1$ and $c_2$ intersect we can determine if $c_1$ and $c_2$ are opposite curves by Section \ref{oppositeness} as follows:
If $L_{i,j}(c_1)\neq 0$ and $R_{i,j}(c_2)\neq 0$ (or vice versa) for some $i,j$ then $c_1$ and $c_2$ are opposite curves. 

\begin{rem}
We note that each $\alpha_j$ and $\beta_j$ can be written in terms of Dynnikov coordinates (\cite{yurttas3}, Theorem $7$).
\end{rem}

\subsection{Statement of the algorithm} 
Let $\mathcal{C}=\{c_1,c_2,\dots,c_k\}$ be a family of opposite curves in $D_n$. Algorithm \ref{alg:algorithm} checks whether or not a complete partition can be constructed from $\mathcal{C}$ making use of Dynnikov coordinates and hence determine if  Dehn twists about the curves in $\mathcal{C}$ generate a free group of rank $k$ or a free product of Abelian groups by Theorem \ref{thm:main}. 
%\begin{notations}\label{not:alg}

 Let $P_1=\{c_r\in \mathcal{C}: \iota(c_r,\,c_1)=0\}$ and $P_i=\{c_r\in \mathcal{C}: \iota(c_r,\,c)=0 , c \in \mathcal{C} \setminus (P_1 \cup P_2 \cup \ldots \cup P_{i-1})\}$ for $i\geq 2$. Let $|P_i|$ be the cardinality of $P_i$.

\begin{alg}
\label{alg:algorithm}
Let $(a^{(1)};\,b^{(1)}), (a^{(2)};\,b^{(2)}),\ldots, (a^{(k)};\,b^{(k)})$ be the Dynnikov coordinates of the opposite curves $c_1,\,c_2,\,\dots,c_k$ in $D_n$ respectively. 
%Let $P_i$ be associated with $c \in (P_1 \cup P_2 \cup \ldots \cup P_{i-1} )^c $ for $i>1$.
\begin{description}
\item[Step 1]  Construct $P_1$ for $(a^{(1)};\,b^{(1)})$ (\cite{yurttas2}, Algorithm $14$). \textbf{If}   $\iota(c_m,\ c_n)\neq 0$  for some $c_m, c_n\in P_1$ (\cite{yurttas2}, Algorithm $14$) then $\mathcal{C}$ doesn't give a complete partition.
\textbf{Otherwise} go to \textbf{Step 2}.

\item[Step 2]     Construct $P_i$  for $c \in \mathcal{C} \setminus (P_1 \cup P_2 \cup \ldots \cup P_{i-1})$ and repeat  \textbf{Step 1} for each $1<i\leq m$  until  $\displaystyle \bigcup\limits_{i=1}^{m}P_j=\mathcal{C}$ and input the constructed sets  $P_1, P_2,\dots, P_m$  to \textbf{Step 3}.  

%$P_j=\emptyset$ for some $1\leq j\leq m$ 
\item[Step 3] \textbf{If} $\sum^{m}_{i=1} |P_i|\neq k$ then  $P_1, P_2,\dots, P_m$ don't give a complete partition of $\mathcal{C}$.

\textbf{Otherwise}  $G\cong \mathbb {Z}^{|P_1|}*\mathbb {Z}^{|P_2|}*\cdots*\mathbb {Z}^{|P_m|}$.

\end{description}
\end{alg}
\begin{ex}
    
Let $\rho(c_1)=(0,0,0;0,1,0)$; 
$\rho(c_2)=(0,0,0;-1,0,1)$ and $\rho(c_3)=(0,0,0;0,0,-1)$ and $\rho(c_3)=(0,0,0;0,0,-1)$. Observe that, $\mathcal{C}= \{c_1,c_2,c_3\}$ is a set of relaxed curves and hence a family of opposite curves. We shall apply Algorithm \ref{alg:algorithm} to check whether or not $c_1,c_2,c_3$ give a complete partition for $\mathcal{C}$ and hence generate a free group of rank $3$ or a free product of Abelian groups.

\begin{itemize}

\item \textbf{Step 1} We construct $P_1$ for $\rho(c_1)=(0,0,0;0,1,0)$ and compute that $P_1=\{c_1, c_3\}$ since $\iota(c_1,\,c_3)=0$ and $(c_1,\,c_2) \neq 0$ (\cite{yurttas2}, Algorithm $14$). 
\item \textbf{Step 2} Now we construct $P_2$ for  $c \in \mathcal{C}\setminus P_1$. Since $c=c_2$, $\iota(c_2, c_1) \neq 0$ and $\iota(c_2,\,c_3) \neq 0$  we get $P_2=\{c_2\}$ (\cite{yurttas2}, Algorithm $14$). Since $P_1\cup P_2=\mathcal{C}$ we input $P_1, P_2$ to \textbf{Step 3}.

\item \textbf{Step 3} Since $|P_1| + |P_2|  = 3$, we conclude that  $G\cong \mathbb {Z}^{|P_1|}*\mathbb {Z}^{|P_2|} = \mathbb {Z}^{2}*\mathbb {Z}$.

\end{itemize}

\end{ex}

%\begin{thm} \cite{hamidi} \label{hamidithm}
%Let $a_1, a_2, \cdots, a_n$ be $n \geq 3 $ simple closed curves on a surface S
%such that $M \leq \frac{m^2}{6} $ where $M = max \{ (a_i, a_j) \}_{i \neq j}$ and $m = min \{ (a_i, a_j) \}_{i \neq j}$ .
%Then $<t_{a_1}, t_{a_2}, \cdots, t_{a_n} >$ is isomorphic to a free group of rank $n$.

%\end{thm}

%\begin{rem}
 %   By Theorem \ref{hamidithm}, geometric intersection numbers of curves must be at least 6, for the inequality to hold (to get a free group). However by the next result (Theorem \ref{mainth1}), we get free group, with curves intersecting twice on $D_n$.
%\end{rem}

%\begin{rem}
   %The main result of is given in Theorem \ref{mainth1} which refines the conditions on the set of curves  given in Theorem 2.1 of \cite{humphries} by Humphries and Theorem \ref{hamidithm} of \cite{hamidi} by Tehrani in the case where the surface is $D_n$.  
%\end{rem}

\subsection*{Acknowledgment}
The authors would like to thank Mustafa Korkmaz and Ferihe Atalan for their valuable discussions and important suggestions on the drafts of this paper. The work is supported by Scientific and Technological Research Council of Turkey (TÜBİTAK), [grant number 123F221].

% ------------------------------------------------------------------------
\end{document}